\newtheorem{theorem}{Theorem}
\newtheorem{lemma}{Lemma}
\newtheorem{proposition}{Proposition}
\newtheorem{remark}{Remark}
\newtheorem{example}{Example}
\def\argmin{\operatorname{argmin}}
\def\min{\operatorname{Minimize}}
\def\const{\operatorname{subject~to~}}
\DeclareMathOperator{\Tol}{Tol}
\DeclareMathOperator{\diag}{diag}
\DeclareMathOperator{\sgn}{sgn}
\newcommand{\lng}{\langle}
\newcommand{\rng}{\rangle}
\newcommand{\R}{\mathbb R}
\newcommand{\NN}{\mathbb{N}}
\begin{document}

\title{A semi-smooth Newton method for  a special  piecewise linear system with  application to  positively  constrained   convex quadratic   programming}

\author{ J. G. Barrios \thanks{IME/UFG, Avenida Esperança, s/n Campus Samambaia, 
Goi\^ania, GO, 74690-900, Brazil (e-mail:{\tt
numeroj@gmail.com}).  The author was supported in part by CAPES.}  
\and
J.Y. Bello Cruz\thanks{IME/UFG, IME/UFG, Avenida Esperança, s/n Campus Samambaia, 
Goi\^ania, GO, 74690-900, Brazil (e-mail:{\tt
yunier@ufg.br}).  The author was supported in part by
FAPEG, CNPq Grants 303492/2013-9,  474160/2013-0 and PRONEX--Optimization(FAPERJ/CNPq).}
\and
 O. P. Ferreira\thanks{IME/UFG, IME/UFG, Avenida Esperança, s/n Campus Samambaia, 
Goi\^ania, GO, 74690-900, Brazil (e-mail:{\tt
orizon@ufg.br}).  The author was supported in part by
FAPEG, CNPq Grants 4471815/2012-8,  305158/2014-7 and PRONEX--Optimization(FAPERJ/CNPq).}  
\and
S. Z. N\'emeth \thanks{School of Mathematics, The University of Birmingham, The Watson Building, 
Edgbaston, Birmingham B15 2TT, United Kingdom
(e-mail:{\tt nemeths@for.mat.bham.ac.uk}). The author was supported in part by 
the Hungarian Research Grant OTKA 60480.} }

\maketitle

\begin{abstract}
\noindent In this paper  a special piecewise linear system is studied. It is shown that,  under a mild assumption,  the  semi-smooth Newton method applied to this system  is  well defined and  the method  generates a  sequence  that  converges linearly to a solution. Besides, we also show that  the generated  sequence is   bounded,  for any  starting point,  and a formula for any accumulation point of this sequence is presented. As an application,  we study the convex quadratic programming problem  under   positive constraints. The numerical  results suggest that the semi-smooth Newton method achieves accurate solutions to large scale  problems in few iterations. \\

\noindent
{\bf Keywords:} Piecewise linear system, quadratic programming,  convex set, convex cone,   semi-smooth Newton method.

\medskip
\noindent
 {\bf  2010 AMS Subject Classification:} 90C33, 15A48.

\end{abstract}

\section{Introduction}
 In this paper we consider  the following   special  piecewise linear system:
\begin{equation}\label{eq:pwls}
		x^+ +Tx=b, 
\end{equation} 
where,  denoting by  $\R^{n \times n}$ the set of $n \times n$ matrices with real entries and  $\R^n\equiv \R^{n \times 1}$    the $n$-dimensional Euclidean space,  the data consists of   $ b$ a   vector in $\R^n$,   $T $   a  nonsingular  matrix in $\R^{n \times n}$,   the  variable $x$ is  a vector in $\R^n$ and  $x^+$    is the  vector in $\R^n$ with  $i$-th component equal to $(x_ i)^+=\max\{x_ i,0\}$.  In \cite{BrugnanoCasulli2007} was proposed a semi-smooth Newton's method for solving \eqref{eq:pwls}.  Under suitable assumption was  showed the finite convergence to a solution of  \eqref{eq:pwls}.   Some works dealing with \eqref{eq:pwls} and its generalizations    include \cite{BrugnanoCasulli2007, BrugnanoCasulli2009, BrugnanoSestini2011, ChenRavi2010, GriewankBerntRadonsStreubel2015, SunLeiLiu2015}.  It is worth  mentioning that a similar equation has been studied  in \cite{Mangasarian2009}.

The  purpose  of  the present   paper  is   to discuss  the  semi-smooth Newton's method   introduced in \cite{BrugnanoCasulli2007}, to solve \eqref{eq:pwls},   under  new assumptions. As an application, we use the obtained results  to  study  the remarkable instance of  \eqref{eq:pwls}, 
\begin{equation}\label{equation}
		\left[ Q-{\rm I}\right]x^+ +x=- {\tilde b}, 
\end{equation}
where  the data consists of  $Q $  a positive definite real  matrix  of size $n\times n $ and   ${\tilde b}\in \R^n $.  Moreover, we  present  some computational experiments designed to investigate its practical viability.  It is worth pointing out that  the semi-smooth Newton's method  for solving \eqref{equation}  was studied in \cite{FerreiraNemeth2014} and   some computational tests were presented in \cite{BarriosFerreiraNemeth2015}. The results obtained in this  paper  improve the ones of \cite{FerreiraNemeth2014}.   As we will show, the system \eqref{equation} arises from the optimality condition of the  convex quadratic programming problem  under a  positive constraint, 
\begin{align}  \label{eq:napsc2}
 & \min   ~ \frac{1}{2}x^\top Qx+x^\top {\tilde b} +c\\  \notag
 & \const  x \in \R^n_+, 
\end{align}
where   $c $ is a real number and  $ \R^n_+ $ is the nonnegative orthant. Note that,  without loss of generality,   we can assume $Q$  symmetric in \eqref{eq:napsc2}   because the objective function of \eqref{eq:napsc2} is equal to $\frac{1}{2}x^\top \tilde Qx+x^\top {\tilde b} +c$, where $\tilde Q=\frac{1}{2}(Q+Q^T)$ is a symmetric matrix. Positively  constrained convex quadratic  programming  is equivalent to  the problem of   projecting the point   onto a  simplicial cone.  The interest in the subject of projection arises in several situations,   having a wide range of applications in pure and applied mathematics such as  Convex Analysis 
(see \emph{e.g.}, \cite{HiriartLemarecal1}),   Optimization (see  \emph{e.g.}, \cite{BuschkeBorwein96,censor07,censor01, scolnik08}), Numerical Linear Algebra (see \emph{e.g.}, \cite{Stewart77}), Statistics  (see \emph{e.g.}, \cite{BerkMarcus96,Dykstra83,Xiaomi1998}), Computer Graphics (see \emph{e.g.}, \cite{Fol90}) and  Ordered 
Vector Spaces (see \emph{e.g.}, \cite{AbbasNemeth2012,IsacNem86,IsacNem92, Nemeth20091,Nemeth2010-2}).    The projection onto a  general simplicial cone is difficult and computationally expensive, this problem has been studied \emph{e.g.}, in \cite{AlSultanMurty1992,EkartNemethNemeth2009,Frick1997,MurtyFathi1982,NemethNemeth2009}.  It is a special convex quadratic
program and its  KKT optimality conditions consists in  a linear complementarity problem (LCP) associated with it,  see \emph{e.g.},
\cite{Murty1988,MurtyFathi1982}.  Therefore, the problem of projecting onto  simplicial cones can be solved by  active set
methods \cite{Bazaraa2006,LiuFathi2011,LiuFathi2012,Murty1988} or any algorithms for solving LCPs,  see \emph{e.g.},  \cite{Bazaraa2006,Murty1988} and
special methods based on its geometry,   see \emph{e.g.}, \cite{MurtyFathi1982,Murty1988}. Other fashionable ways to solve this problem are based on
the classical von Neumann algorithm (see \emph{e.g.},  Dykstra algorithm \cite{DeutschHundal1994,Dykstra83,Shusheng2000}). Nevertheless, these 
methods are also quite expensive (see the numerical results in \cite{Morillas2005} and the remark preceding Section 6.3 in 
\cite{MingGuo-LiangHong-BinKaiWang2007}).

Following the ideas  of \cite{Mangasarian2009},    we show that the approach using semi-smooth Newton's method,   for solving \eqref{eq:napsc2},   has potential advantages over existing methods. The main advantage appears to be the global,  linear convergence and  to achieve accurate solutions  of large scale problems in few iterations. Our numerical results suggest,   for a given class of problem, that the number of required iterations is almost unchanged.  The numerical results also indicate a remarkable  robustness   with respect  to the starting point. 
  
  The organization of the paper is as follows.  In Section~\ref{sec:int.1},  some notations and preliminaries  used in the paper are presented.  In Section~\ref{sec: defscls} we study the convergence properties of  the semi-smooth Newton's method   for solving \eqref{eq:pwls}.  In Section~\ref{sec: defbp}  the results of Section~\ref{sec: defscls} are applied  to find a solution of \eqref{eq:napsc2}. In Section~\ref{sec:ctest} we present  some computational tests. Some final remarks are made in Section~\ref{sec:conclusions}.
\subsection{Notations and preliminaries} \label{sec:int.1}
In this subsection we present  the notations and  some auxiliary results used throughout the paper.
Let $\R^n$ be   the $n$-dimensional Euclidean space with  the canonical inner  product
$\lng\cdot,\cdot\rng$  and induced norm  $\|\cdot\|$.    The  $i$-th component of a vector $x
\in\R^n$ is denoted by $x_i$ .  We use the partial ordering for vectors,  defined by $ x\leq y$  meaning  $ x_i\leq y_i$, for all $i=1, \ldots, n$.
  For $x\in \R^n$, $\sgn(x)$ will denote a vector with components equal to $1$, $0$ or $-1$ depending on whether the corresponding component of the
vector $x$ is positive, zero or negative.    If $a\in\R$ and $x\in\R^n$, then denote $a^+:=\max\{a,0\}$, $a^-:=\max\{-a,0\}$ and $x^+$ and  $x^-$  the vectors with  $i$-th component equal to $(x_ i)^+$ and  $(x_ i)^-$, respectively. From the definitions of $x^+$ and  $x^-$ we have $x=x^+- x^-$,  $ \langle  x^+,   x^-\rangle=0$ and  $x^+,  x^- \in \R^n_+$. 
\begin{lemma}  \label{l:error}
Let  $x, y\in \R^n$. Then  $ \left\| y^+ - x^+-\emph{diag}(\emph{sgn}(x^+))(y-x)\right\|\leq \|y-x\|$.
\end{lemma}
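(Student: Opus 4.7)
The plan is to verify the inequality componentwise and then conclude via the monotonicity of the Euclidean norm under pointwise absolute value domination. The key observation is that $D:=\operatorname{diag}(\operatorname{sgn}(x^+))$ is diagonal with entries $D_{ii}=1$ when $x_i>0$ and $D_{ii}=0$ when $x_i\le 0$. So, for each index $i$, the $i$-th component of $y^+-x^+-D(y-x)$ depends only on $(x_i,y_i)$, and I will show the pointwise estimate
$$\left|(y^+-x^+-D(y-x))_i\right|\le |y_i-x_i|,\qquad i=1,\dots,n.$$

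To establish the pointwise bound I split into four cases according to the signs of $x_i$ and $y_i$. If $x_i>0$ and $y_i\ge 0$, then $x_i^+=x_i$, $y_i^+=y_i$, $D_{ii}=1$, and the $i$-th component vanishes. If $x_i>0$ and $y_i<0$, the component reduces to $-y_i$, whose absolute value is at most $x_i-y_i=|y_i-x_i|$ since $x_i>0>y_i$. If $x_i\le 0$ and $y_i\le 0$, the component equals $0-0=0$. Finally if $x_i\le 0$ and $y_i>0$, the component reduces to $y_i$, whose absolute value is at most $y_i-x_i=|y_i-x_i|$. In each case the desired pointwise estimate holds.

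Squaring and summing the pointwise estimate over $i$ immediately yields
$$\|y^+-x^+-D(y-x)\|^2=\sum_{i=1}^n \left|(y^+-x^+-D(y-x))_i\right|^2\le \sum_{i=1}^n |y_i-x_i|^2=\|y-x\|^2,$$
after which the conclusion follows by taking the square root.

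There is no real obstacle here; the argument is purely a finite case distinction, and the only thing to be careful about is the convention that $\operatorname{sgn}(x_i^+)=0$ when $x_i=0$, which makes the case $x_i=0$ sit comfortably in the ``$x_i\le 0$'' branch without any boundary difficulty.
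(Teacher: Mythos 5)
Your proof is correct and follows essentially the same route as the paper's: a componentwise case analysis on the sign of $x_i$ (and $y_i$) yielding the pointwise bound $|(y^+-x^+-D(y-x))_i|\le|y_i-x_i|$, followed by summing squares. If anything, your placement of $x_i=0$ in the branch where $D_{ii}=0$ is slightly more faithful to the paper's stated convention $\sgn(0)=0$ than the paper's own case split, which assigns $x_i\ge 0$ to the branch with $\sgn(x_i^+)=1$; the inequality holds either way.
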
 
\begin{proof}
For  each  $i\in \{1,\ldots, n\}$, we have two possibilities:
\begin{itemize}
\item[(a)]  $x_i< 0$. In this case,  $\mbox{sgn}(x_i^+)=0$. Thus, $| y_i^+ - x_i^+-\mbox{sgn}(x_i^+)(y_i-x_i)|= |y_i^+|\leq |y_i-x_i|$.  
\item[(b)] $x_i\geq 0$. In this case,  $\mbox{sgn}(x_i^+)=1$. Hence,  $| y_i^+ - x_i^+-\mbox{sgn}(x_i^+)(y_i-x_i)|= |y_i^+-y_i|\leq |y_i-x_i|$. 
\end{itemize}
Combining (a) and (b) we have $( y_i^+ - x_i^+-\mbox{sgn}(x_i^+)(y_i-x_i))^2\leq (y_i-x_i)^2$,  for all  $i=1,\ldots, n$, which implies the desired inequality.
\end{proof}
  The matrix ${\rm I} \in \R^{n \times n}$ denotes the  identity matrix.  If $x\in \R^n$ then $\diag (x)\in \R^{n \times n}$ will denote a  diagonal matrix with $(i,i)$-th entry equal
to $x_i$, $i=1,\dots,n$. Denote $\|M\|:=\max \{\|Mx\|~:~  x\in \R^{n}, ~\|x\|=1\}$ for any  $M \in \R^{n\times n}$.  The next useful result was proved in  2.1.1,  page 32  of \cite{Ortega1990}.
\begin{lemma}\label{lem:ban}
Let $E \in \R^{n\times n}$. If  $\|E\|<1$,  then $E-{\rm I}$
is invertible and  $ \|(E-{\rm I})^{-1}\|\leq 1/\left(1-
\|E\|\right). $
\end{lemma}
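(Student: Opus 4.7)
The plan is to reduce the statement to the classical Banach lemma / Neumann series argument, and in fact to give a self-contained proof by hand since the estimate is so short. I would first rewrite $E-{\rm I}=-({\rm I}-E)$, so that invertibility of $E-{\rm I}$ is equivalent to invertibility of ${\rm I}-E$, and the operator norm of the inverse is unchanged under the sign flip.

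The main step is injectivity of ${\rm I}-E$. Suppose $({\rm I}-E)x=0$; then $x=Ex$, so $\|x\|=\|Ex\|\leq \|E\|\|x\|$. Because $\|E\|<1$, this forces $\|x\|=0$, so ${\rm I}-E$ has trivial kernel. Since we are working in finite dimensions $\R^{n\times n}$, injectivity already yields bijectivity, hence invertibility of ${\rm I}-E$ (and therefore of $E-{\rm I}$). This sidesteps any convergence argument for the Neumann series, which is the more delicate route I would otherwise take.

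To obtain the norm bound, I would pick an arbitrary $y\in\R^n$ and set $x=({\rm I}-E)^{-1}y$, so that $({\rm I}-E)x=y$, that is, $x=Ex+y$. Taking norms gives $\|x\|\leq \|E\|\|x\|+\|y\|$, hence $(1-\|E\|)\|x\|\leq \|y\|$. Dividing by $1-\|E\|>0$ and taking the supremum over unit vectors $y$ yields $\|({\rm I}-E)^{-1}\|\leq 1/(1-\|E\|)$, and this is the same as $\|(E-{\rm I})^{-1}\|\leq 1/(1-\|E\|)$.

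There is no real obstacle here; the only subtlety is justifying the jump from injectivity to invertibility, which in the finite-dimensional setting $\R^{n\times n}$ is immediate from the rank–nullity theorem. In an infinite-dimensional setting one would instead construct the inverse explicitly via the Neumann series $\sum_{k\geq 0}E^k$, verifying Cauchy-ness from $\|E^k\|\leq \|E\|^k$ and the geometric series bound, but that extra work is not needed here.
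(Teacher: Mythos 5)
Your proof is correct. Note that the paper itself does not prove this lemma; it simply cites 2.1.1, page 32 of Ortega's \emph{Numerical Analysis}, where the standard argument constructs the inverse explicitly as the Neumann series $\sum_{k\geq 0}E^k$, using $\|E^k\|\leq\|E\|^k$ and the geometric series to get both convergence and the bound $1/(1-\|E\|)$ in one stroke. Your route is genuinely different and more elementary: you get invertibility from injectivity plus rank--nullity (so no convergence argument is needed), and you extract the norm bound directly from the identity $x=Ex+y$ via the reverse triangle-type estimate $(1-\|E\|)\|x\|\leq\|y\|$. What the Neumann series buys is an explicit formula for $({\rm I}-E)^{-1}$ and a proof that transfers verbatim to Banach algebras; what your argument buys is brevity and the avoidance of any series manipulation, at the cost of being tied to finite dimensions (a limitation you correctly flag and which is irrelevant here, since the lemma is stated for $E\in\R^{n\times n}$). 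Every step checks out, including the observation that the sign flip $E-{\rm I}=-({\rm I}-E)$ leaves the operator norm of the inverse unchanged.
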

 We end this section with the contraction mapping principle  (see 8.2.2,  page 153  of \cite{Ortega1990}).
\begin{theorem}  [contraction mapping principle] \label{fixedpoint}
 Let  $\phi : \R^{n} \to   \R^{n}$.  Suppose that there exists $\lambda \in  [0,1)$ such that  $\|\phi(y)-\phi(x)\| \le \lambda \|y-x\|$, for all $ x, y \in  \R^{n}$. Then there exists a unique $\bar x\in  \R^{n}$  such that $\phi(\bar x) = \bar x$.
\end{theorem}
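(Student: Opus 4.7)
The plan is to follow the standard Banach fixed-point argument, tailored to the finite-dimensional setting $\R^n$ (which is complete). I would prove existence by explicitly constructing the fixed point as the limit of Picard iterates, and prove uniqueness by a one-line contraction argument.

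For existence, I would pick an arbitrary $x_0 \in \R^n$ and define the sequence $x_{k+1} := \phi(x_k)$ for $k \geq 0$. The key computation is the telescoping estimate: applying the contraction hypothesis repeatedly gives $\|x_{k+1}-x_k\| \leq \lambda \|x_k - x_{k-1}\| \leq \cdots \leq \lambda^k \|x_1 - x_0\|$. From this, for any $m > k$, the triangle inequality yields
\begin{equation*}
\|x_m - x_k\| \leq \sum_{j=k}^{m-1} \|x_{j+1}-x_j\| \leq \|x_1-x_0\| \sum_{j=k}^{m-1} \lambda^j \leq \frac{\lambda^k}{1-\lambda}\|x_1-x_0\|.
\end{equation*}
Since $\lambda \in [0,1)$, the right-hand side tends to $0$ as $k \to \infty$, so $\{x_k\}$ is Cauchy. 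By completeness of $\R^n$, it converges to some $\bar x \in \R^n$.

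To show $\phi(\bar x) = \bar x$, I would use that $\phi$ is continuous (in fact Lipschitz with constant $\lambda$): passing to the limit in $x_{k+1} = \phi(x_k)$ and using $\|\phi(x_k) - \phi(\bar x)\| \leq \lambda \|x_k - \bar x\| \to 0$ gives $\bar x = \phi(\bar x)$. For uniqueness, suppose $\bar y$ also satisfies $\phi(\bar y) = \bar y$; then $\|\bar x - \bar y\| = \|\phi(\bar x)-\phi(\bar y)\| \leq \lambda \|\bar x - \bar y\|$, which forces $(1-\lambda)\|\bar x-\bar y\| \leq 0$ and hence $\bar x = \bar y$, since $1-\lambda > 0$.

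No step here is particularly hard; the only subtlety is verifying the geometric-series bound that establishes the Cauchy property, and implicitly invoking completeness of $\R^n$. Since this is a classical result cited from Ortega's text, a self-contained sketch along these lines is all that is needed.
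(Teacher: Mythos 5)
Your argument is correct and complete: the Picard-iteration construction, the geometric-series Cauchy estimate using $\lambda\in[0,1)$, the passage to the limit via Lipschitz continuity, and the one-line uniqueness argument are all valid, and you correctly (if implicitly) invoke the completeness of $\R^n$. The paper itself does not prove this theorem but cites it from Ortega's text (8.2.2, p.~153 of \cite{Ortega1990}), and your proof is precisely the standard argument that the cited source gives, so there is nothing to compare beyond noting that you have supplied the proof the paper delegates to a reference.
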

\section{A semi-smooth Newton method for  a   piecewise linear systems} \label{sec: defscls}

In this section we  present  and  analyze  the semi-smooth Newton's method for solving \eqref{eq:pwls}.  We begin with an existence result of solution to the equation \eqref{eq:pwls}.
\begin{proposition} \label{pr:uniqqpw} 
Let $ \lambda \in \mathbb{R}$.  If $\left\|T^{-1}\right\| \leq \lambda<1 $  then   \eqref{eq:pwls}   has unique solution  for any $b\in \R^n$.
\end{proposition}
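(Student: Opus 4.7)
The plan is to reformulate \eqref{eq:pwls} as a fixed-point equation and then invoke the contraction mapping principle (Theorem~\ref{fixedpoint}). Since $T$ is nonsingular, \eqref{eq:pwls} is equivalent to
\begin{equation*}
x = T^{-1}b - T^{-1}x^+,
\end{equation*}
so I would define $\phi : \R^n \to \R^n$ by $\phi(x) := T^{-1}b - T^{-1}x^+$ and observe that solutions of \eqref{eq:pwls} are exactly fixed points of $\phi$.

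Next I would check that $\phi$ is a contraction with constant $\lambda$. For any $x,y \in \R^n$,
\begin{equation*}
\|\phi(y) - \phi(x)\| = \|T^{-1}(x^+ - y^+)\| \leq \|T^{-1}\|\,\|y^+ - x^+\| \leq \lambda\,\|y^+ - x^+\|.
\end{equation*}
The remaining ingredient is the nonexpansivity bound $\|y^+ - x^+\| \leq \|y - x\|$. This is the standard fact that the positive-part map is the metric projection onto $\R^n_+$ and hence 1-Lipschitz; componentwise, $|y_i^+ - x_i^+| \leq |y_i - x_i|$ for each $i$, which follows from a short case analysis on the signs of $x_i,y_i$ (alternatively, one can read it off from Lemma~\ref{l:error} by specializing the diagonal matrix to the identity or the zero matrix according to the sign pattern). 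Either way, one obtains $\|\phi(y) - \phi(x)\| \leq \lambda \|y - x\|$ with $\lambda \in [0,1)$.

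Applying Theorem~\ref{fixedpoint} then yields a unique fixed point $\bar{x} \in \R^n$ of $\phi$, which is the unique solution of \eqref{eq:pwls}. There is essentially no hard step here; the only nontrivial point is the nonexpansivity of $x \mapsto x^+$, and this is elementary.
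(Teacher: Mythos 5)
Your proof is correct and follows essentially the same route as the paper: reformulate \eqref{eq:pwls} as a fixed-point equation for $\phi(x)=T^{-1}b-T^{-1}x^+$, verify the contraction property, and apply Theorem~\ref{fixedpoint}. You in fact make explicit the nonexpansivity $\|y^+-x^+\|\leq\|y-x\|$ that the paper's proof uses silently (your componentwise case analysis is the right justification; the parenthetical appeal to Lemma~\ref{l:error} is not quite a direct specialization, but it is unnecessary anyway).
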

  \begin{proof}  The equation  \eqref{eq:pwls}   has a solution  if only if 
 $
 \phi (x)=-T^{-1}x^+ + T^{-1}b 
 $
 has a fixed point. It follows from definition of  $ \phi$  that
 \[
 \phi(y)- \phi(x)= -T^{-1} (y^+ -x^+ ), \qquad \qquad   ~x, y \in \mathbb{R}^n.
\]
Since   $\left\|T^{-1}\right\|<\lambda<1 $,  the last equality implies that $ \| \phi(y)- \phi(x)\|\leq \lambda \|y-x\|,  $ for all   $ x, y \in \mathbb{R}^n$. Hence  $ \phi$ is a contraction. Therefore  applying Theorem~\ref{fixedpoint} we conclude that $\phi $ has  precisely a unique fixed point and consequently   \eqref{eq:pwls}     has  a unique solution.
\end{proof}
Then  the assumption   $\left\|T^{-1}\right\| <1 $   in Proposition~\ref{pr:uniqqpw}  is sufficient to the  uniqueness of solution of
\eqref{eq:pwls}.   The next example shows that it is not possible  to increase the upper  bound of $\left\|T^{-1}\right\|$  and still ensure the uniqueness of solution in \eqref{eq:pwls}. 
\begin{example}
Consider the  function $F: \mathbb{R}^2 \to \mathbb{R}^2$ defined by  $F(x)=x^+ + Tx- b$, where
$$
T=  \begin{bmatrix}
-1 & 0 \\
0 & 1
\end{bmatrix},  \qquad 
b= \begin{bmatrix}
0\\
2
\end{bmatrix}.
$$
Note that  $\|T^{-1}\|=1$ and there holds $F(x^*)=F(x^{**})=0$, where $ x^*= [1,  1]^T$ and $x^{**}=[0, 1 ]^T$.
\end{example}
\noindent  The {\it semi-smooth Newton method} introduced in   \cite{LiSun93}   for  finding the zero of the function 
\begin{equation} \label{eq:fucpw}
F(x):=x^+ + Tx- b,   \qquad \qquad   ~x \in \mathbb{R}^n, 
\end{equation}
with starting point   $x^{0}\in \mathbb{R}^n$, it  is formally  defined by 
\begin{equation} \label{eq:nmqcpw}
F(x^{k})+ V^k\left(x^{k+1}-x^{k}\right)=0,  \qquad   V^k \in \partial F(x^{k}), \qquad k=0,1,\ldots,
\end{equation}
where  $ V^k$ is any subgradient in  $ \partial F(x^{k})$  the  Clarke generalized Jacobian of $F$ at $x^{k}$  (see   Definition~ 2.6.1 on page 70 of  \cite{Clarke1990}).  Letting 
\begin{equation} \label{eq:mppw}
P(x):=\mbox{diag}(\mbox{sgn}(x^+)),  \qquad x\in  \R^n, 
\end{equation} 
it easy to see that 
\[
P(x) +T\in  \partial F(x), \qquad x\in  \R^n. 
\]
Since  $ P(x)x=x^+ $ for all $x\in \R^n$,       taking  $V^k=P(x^{k}) +T$,    equation  \eqref{eq:nmqcpw}   becomes   
\begin{equation}\label{eq:newtonc2pw}
	\left[ P(x^{k}) +T\right]x^{k+1}=b,   \qquad k=0, 1,  \ldots , 
\end{equation}
which  define formally the {\it semi-smooth Newton  sequence} $\{x^{k}\}$    for solving  \eqref{eq:pwls}. Note that the above
iteration is exactly the one stated in equation (6) of \cite{BrugnanoCasulli2007}.  We devote the rest of this section to studying  the convergence properties of this sequence.
\begin{proposition} \label{pr:bounded}
        Assume that the matrix $P(x) +T$    is nonsingular for all $x\in \mathbb{R}^n$.  Then,   $\{x^{k}\}$  is well defined and bounded from any starting point. Moreover, for each accumulation point $\bar x$ of  $\{x^{k}\}$ there exists 
	 an $\hat x \in \mathbb{R}^n$ such that
\begin{equation} \label{eq:sap2}
\left[ P(\hat{x}) +T \right]{\bar x}=b.
\end{equation}
	 In particular, if   $\mbox{sgn}(\bar x^+)=\mbox{sgn}(\hat x^+)$,   then $ \bar x$  is a solution of \eqref{eq:pwls}.

\end{proposition}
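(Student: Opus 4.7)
The plan is to exploit the fact that the matrix $P(x)$ takes only finitely many values, namely diagonal matrices whose entries are $0$ or $1$, hence at most $2^n$ distinct matrices. This observation drives all three claims.

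First, for well-definedness, I would note that since $P(x^k)+T$ is nonsingular by hypothesis for every $x^k$, the recursion $x^{k+1}=[P(x^k)+T]^{-1}b$ unambiguously produces the next iterate, so $\{x^k\}$ is well defined from any starting point $x^0$. For boundedness, the key observation is that for $k\geq 0$, each iterate $x^{k+1}$ belongs to the finite set $\mathcal S:=\{[P+T]^{-1}b : P\in\mathcal P\}$, where $\mathcal P$ is the finite family (of cardinality at most $2^n$) of diagonal $0/1$ matrices. A finite set is obviously bounded, so $\{x^k\}_{k\geq 1}\subset\mathcal S$ is bounded; together with $x^0$ this gives the boundedness of the whole sequence.

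For the accumulation point statement, suppose $x^{k_j}\to\bar x$. Look at the predecessor index $k_j-1$. Since the matrices $P(x^{k_j-1})$ all lie in the finite set $\mathcal P$, the pigeonhole principle lets me pass to a further subsequence (which I still denote by $k_j$) along which $P(x^{k_j-1})$ is a constant matrix, say $\hat P\in\mathcal P$. Pick any index $j_0$ in this subsequence and set $\hat x:=x^{k_{j_0}-1}$, so that $P(\hat x)=\hat P$. The semi-smooth Newton recursion \eqref{eq:newtonc2pw} at step $k_j-1$ reads
\[
[\hat P+T]\,x^{k_j}=b,\qquad j=1,2,\ldots,
\]
so letting $j\to\infty$ and using $x^{k_j}\to\bar x$ together with continuity of the linear map $y\mapsto[\hat P+T]y$ gives $[P(\hat x)+T]\bar x=b$, which is precisely \eqref{eq:sap2}.

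Finally, for the solution statement, if $\mathrm{sgn}(\bar x^+)=\mathrm{sgn}(\hat x^+)$ then by the definition \eqref{eq:mppw} we have $P(\bar x)=P(\hat x)$. Substituting into \eqref{eq:sap2} yields $P(\bar x)\bar x+T\bar x=b$, and since $P(\bar x)\bar x=\bar x^+$ by the very definition of $P$ and the identity $\mathrm{diag}(\mathrm{sgn}(x^+))x=x^+$, we conclude $\bar x^++T\bar x=b$, so $\bar x$ solves \eqref{eq:pwls}. The only delicate point in the whole argument is the bookkeeping in the subsequence extraction for the accumulation point: one must go back one step in the iteration (to index $k_j-1$) rather than looking at $P(x^{k_j})$ directly, since it is $P(x^{k_j-1})$ that appears in the linear system characterizing $x^{k_j}$. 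Beyond that the argument is essentially combinatorial plus a limit.
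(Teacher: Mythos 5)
Your proof is correct and follows essentially the same route as the paper, which simply defers to the argument of Proposition 3 of Mangasarian's paper on absolute value equations: that argument is precisely the one you give, namely that $P(\cdot)$ takes at most $2^n$ values, so the iterates $x^{k+1}=[P(x^k)+T]^{-1}b$ lie in a finite (hence bounded) set, and a pigeonhole/subsequence extraction on the predecessor matrices $P(x^{k_j-1})$ yields \eqref{eq:sap2}. Your attention to using the predecessor index $k_j-1$ and the identity $P(x)x=x^+$ for the final claim is exactly what is needed.
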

\begin{proof}
To  prove  this result we follow similar arguments of Proposition 3 of \cite{Mangasarian2009}.
\end{proof}
\noindent The next proposition gives a condition for the Newton iteration \eqref{eq:newtonc2pw} to finish in a finite
number of steps, which can be proved by using the same argument as the one used in the proof of Lemma 3 of \cite{BrugnanoCasulli2007}.
\begin{proposition} \label{pr:ft}
	If in \eqref{eq:newtonc2pw} it happens that $\mbox{sgn}((x^{k+1})^+)=\mbox{sgn}((x^{k})^+)$,   then $x^{k+1}$ is a solution of \eqref{eq:pwls}.
\end{proposition}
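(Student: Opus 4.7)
The plan is to reduce the statement to the simple algebraic identity
\[
\mbox{sgn}(x^+) = \mbox{sgn}(y^+) \quad \Longrightarrow \quad P(x)\,y = y^+,
\]
and then plug this identity into the Newton recursion.

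First I would verify the identity componentwise. By definition $P(x) = \mbox{diag}(\mbox{sgn}(x^+))$, so $\mbox{sgn}(x_i^+)$ is $1$ when $x_i>0$ and $0$ when $x_i\leq 0$ (the value $-1$ cannot occur since $x^+\in \R^n_+$). Fix an index $i$ and suppose $\mbox{sgn}(x_i^+)=\mbox{sgn}(y_i^+)$. Either both equal $1$, in which case $x_i,y_i>0$ and hence $(P(x)y)_i = y_i = y_i^+$; or both equal $0$, in which case $y_i\leq 0$ and hence $(P(x)y)_i = 0 = y_i^+$. Either way the components agree, so $P(x)y = y^+$.

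Next I would specialize to $x=x^k$ and $y=x^{k+1}$. The hypothesis $\mbox{sgn}((x^{k+1})^+)=\mbox{sgn}((x^k)^+)$ and the identity just established give $P(x^k)\,x^{k+1} = (x^{k+1})^+$. Substituting into the Newton equation \eqref{eq:newtonc2pw},
\[
b = \bigl[P(x^k) + T\bigr]\,x^{k+1} = P(x^k)\,x^{k+1} + T x^{k+1} = (x^{k+1})^+ + T x^{k+1},
\]
which is exactly \eqref{eq:pwls} at the point $x^{k+1}$.

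There is essentially no obstacle here; the only subtlety is the bookkeeping in the sign identity, in particular handling the case $\mbox{sgn}(x_i^+)=0$ correctly (where the entry of $P(x)$ is zero, so one must use $y_i\leq 0$ to conclude $y_i^+=0$). Since the sign function restricted to $x^+$ only takes the values $0$ and $1$, matching signs forces precisely the right compatibility between $x$ and $y$ coordinatewise, and the proof collapses to the one-line substitution above.
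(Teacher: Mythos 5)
Your proof is correct, and it is precisely the standard argument that the paper itself omits (it only defers to Lemma~3 of Brugnano--Casulli): the matching sign patterns force $P(x^k)x^{k+1}=(x^{k+1})^+$, so the Newton equation $[P(x^k)+T]x^{k+1}=b$ collapses to $(x^{k+1})^+ + Tx^{k+1}=b$. Your componentwise case check (in particular handling $\mbox{sgn}(x_i^+)=0$ via $y_i\le 0$) is exactly the bookkeeping needed, so there is nothing to add.
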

Next,  we state  and  prove a theorem for  the semi-smooth Newton's method   \eqref{eq:newtonc2pw} for solving \eqref{eq:pwls}. 
\begin{theorem}\label{th:mrqpw}
Let  $b \in \R^n$ and  $T \in \R^{n\times n}$ be a nonsingular matrix.   Assume that $\left\|T^{-1}\right\|<1$. Then,   for any starting point $x^0 \in \R^{n}$,   $\{x^{k}\}$    is well-defined. Additionally,  if
\begin{equation} \label{eq:CC2pw}
\left\|T^{-1}\right\|<1/2, 
\end{equation}
then  $\{x^{k}\}$    converges $Q$-linearly  to  $x^*\in \mathbb{R}^n$,  the unique solution  of  \eqref{eq:pwls}, as follows 
  \begin{equation} \label{eq:lconv3pw}
\|x^*-x^{k+1}\|\leq \frac{\|T^{-1}\|}{1- \|T^{-1}\|} \|x^*-x^{k}\|,  \qquad k=0, 1, \ldots .
\end{equation}
\end{theorem}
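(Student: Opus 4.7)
The plan is to split the argument into two parts: first well-definedness under the hypothesis $\|T^{-1}\|<1$, and then the linear rate under the strengthened hypothesis $\|T^{-1}\|<1/2$.

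For well-definedness, I would factor $P(x^k)+T = T\bigl({\rm I}+T^{-1}P(x^k)\bigr)$. Because $P(x)=\diag(\sgn(x^+))$ is diagonal with entries in $\{0,1\}$, one has $\|P(x)\|\le 1$, and therefore $\|{-}T^{-1}P(x^k)\|\le \|T^{-1}\|<1$. Applying Lemma \ref{lem:ban} with $E=-T^{-1}P(x^k)$ shows that ${\rm I}+T^{-1}P(x^k)$ is invertible, hence $P(x^k)+T$ is invertible, so the iterate $x^{k+1}$ is uniquely determined. Uniqueness of a solution $x^*$ of \eqref{eq:pwls} is inherited directly from Proposition \ref{pr:uniqqpw}.

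For the rate, the key step is to extract a clean error recursion. The equations $[P(x^k)+T]x^{k+1}=b$ and $(x^*)^+ + Tx^* = P(x^*)x^*+Tx^*=b$ give, after subtraction,
\begin{equation*}
[P(x^k)+T]\bigl(x^{k+1}-x^*\bigr)=(x^*)^+-P(x^k)x^*.
\end{equation*}
Now I would use the identity $P(x^k)x^k=(x^k)^+$ to rewrite the right-hand side as
\begin{equation*}
(x^*)^+-P(x^k)x^* = (x^*)^+-(x^k)^+-P(x^k)(x^*-x^k),
\end{equation*}
which is exactly the quantity controlled by Lemma \ref{l:error} (with $x=x^k$, $y=x^*$), yielding the bound $\|(x^*)^+-P(x^k)x^*\|\le \|x^*-x^k\|$.

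Next, inverting via the factorization above gives $[P(x^k)+T]^{-1}=({\rm I}+T^{-1}P(x^k))^{-1}T^{-1}$, and Lemma \ref{lem:ban} yields $\|({\rm I}+T^{-1}P(x^k))^{-1}\|\le 1/(1-\|T^{-1}\|)$. Combining,
\begin{equation*}
\|x^{k+1}-x^*\|\le \frac{\|T^{-1}\|}{1-\|T^{-1}\|}\,\|x^k-x^*\|,
\end{equation*}
which is the claimed estimate; the constant is strictly less than $1$ precisely when $\|T^{-1}\|<1/2$, giving $Q$-linear convergence. The main obstacle is the middle identification: recognizing that the right-hand side of the error equation is exactly the expression bounded by Lemma \ref{l:error}; once that algebraic rearrangement is spotted, the rest is a bookkeeping combination of the two preparatory lemmas.
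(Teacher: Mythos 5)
Your proposal is correct and follows essentially the same route as the paper: factor $P(x^k)+T$ through $T$ and invoke Lemma~\ref{lem:ban} for invertibility and the bound $\|[P(x^k)+T]^{-1}\|\le \|T^{-1}\|/(1-\|T^{-1}\|)$, then reduce the error equation to the quantity $(x^*)^+-(x^k)^+-P(x^k)(x^*-x^k)$ controlled by Lemma~\ref{l:error}. Your derivation of the error recursion by direct subtraction is a slightly cleaner bookkeeping of the same identity the paper uses, so there is nothing substantive to add.
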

\begin{proof} 
Let  $x \in \mathbb{R}^n$. Since  $\left\|T^{-1}\right\|<1$, the definition of $P(x)$ implies $\|T^{-1}P(x)\|\leq \|T^{-1}\|<1 $.  Thus,    Lemma~\ref{lem:ban}  implies that  $-T^{-1}P(x) -{\rm I}$ is nonsingular.  Because $T$  is   nonsingular and 
 \[
  P(x)+T=-T\left[ -T^{-1}P(x)-{\rm I}\right], \qquad  \quad ~x \in \mathbb{R}^n, 
 \]
  we conclude  that  $ P(x)+T$ is also nonsingular. Hence, for any starting point $x^0 \in \R^{n}$, \eqref{eq:newtonc2pw} implies that  $\{x^{k}\}$    is well-defined  .

 Using  Proposition~\ref{pr:uniqqpw},  we conclude that \eqref{eq:pwls} has a unique solution $x^*\in  \R^{n}$.  Since $x^*\in  \R^{n}$ is the solution of \eqref{eq:pwls},   we have $ [P(x^*)+T]x^*-b=0$, which together  with definition of   $\{x^{k}\}$ in \eqref{eq:newtonc2pw} and \eqref{eq:mppw} implies
\[
x^*-x^{k+1}= -[P(x^{k})+T]^{-1}\big[ [P(x^*)+T]x^*-b - [P(x^{k})+T]x^{k}+b- [P(x^{k})+T](x^*-x^{k})\big],  \quad k=0, 1, \ldots .
\]
On the other hand, since $ P(x)x=x^+ $ for all $x\in \R^n$, after  simple algebraic manipulations we obtain 
\[
[P(x^*)+T]x^*-b - [P(x^{k})+T]x^{k}+b- [P(x^{k})+T](x^*-x^{k})= (x^*)^+ - (x^{k})^+-P(x^{k})(x^*-x^{k}),
\]
for $k=0, 1, \ldots $. Combining the two above equalities and   using properties of the norm   we have
\[
\|x^*-x^{k+1}\|\leq \left \|P(x^{k})+T]^{-1}\right\|  \left\| (x^*)^+ - (x^{k})^+-P(x^{k})(x^*-x^{k})\right\|,  \qquad k=0, 1, \ldots .
\]
It follows from Lemma~\ref{l:error} that  $ \left\| (x^*)^+ - (x^{k})^+-P(x^{k})(x^*-x^{k})\right\|\leq \|x^*-x^{k}\|$, for $ k=0, 1, \ldots$, and the last inequality   becomes 
\begin{equation} \label{eq:conv}
\|x^*-x^{k+1}\|\leq \left \|[P(x^{k})+T]^{-1}\right\|  \|x^*-x^{k}\|, \qquad k=0, 1, \ldots .
\end{equation}
On the other hand, after some algebra and using properties of the norm,  we have
\[
\left\| [P(x^{k})+T]^{-1}\right\|=  \left\| [-T^{-1}P(x^{k})-{\rm I}]^{-1} \left[-T^{-1}\right]\right\|\leq  \left\| [T^{-1}P(x^{k})+{\rm I}]^{-1}\right\| \|T^{-1}\|,\quad k=0, 1, \ldots , 
\]
which combined with   Lemma~\ref{lem:ban} and considering that $\|T^{-1}P(x^{k})\|\leq \|T^{-1}\|<1 $, implies  
\[
\left\| [P(x^{k})+T]^{-1}\right\| \leq \frac{ \|T^{-1}\|}{1- \left\|T^{-1}\right\|}, \qquad \quad k=0, 1, \ldots .
\]
Thus, last inequality together with  \eqref{eq:conv} gives   \eqref{eq:lconv3pw}. Assumption \eqref{eq:CC2pw} implies  $\|T^{-1}\|/(1- \|T^{-1}\|)<1$. Therefore,   \eqref{eq:lconv3pw} implies that $\{x^{k}\}$ converges Q-linearly,  from any starting point $x^0$,  to the 
solution $x^*$ of  \eqref{eq:pwls}. Hence  the theorem is proven. 
\end{proof}
 For stating the next result we need the following definition. Let $S:=\left(s_{ij}\right)\in \R^{n \times n}$  be with $i-$th row
$s_i:=(s_{i1}, \ldots, s_{in})^T$, $i=1,\dots,n$. We say that $S$ has {\it }, if for each $i-$th row $s_i$ either $s_i\geq 0$
or $s_i\leq 0$.
\begin{example}
The following three matrices have  its rows with definite sign:
$$
\begin{bmatrix}
-2 & -3  &  -1 \\
1 & 1 & 2 \\
5 & 2 & 1
\end{bmatrix},
\qquad
\begin{bmatrix}
2 & 3  &  1 \\
1 & 1 & 2 \\
5 & 2 & 1
\end{bmatrix},
\qquad 
\begin{bmatrix}
-2 & -3  &  -1 \\
-1 & -1 & -2 \\
-5 & -2 & -1
\end{bmatrix}.
$$
\end{example}
\begin{example}
It follows from \cite[Theorem 2]{Plemmons1977}  that,   if $A \in \R^{n \times n}$ is a non-singular  $M$-matrix then  $(A + D)^{-1}\geq 0$, for
each diagonal matrix  $D \in \R^{n \times n}$ with $D\geq 0$. In particular,  if $A \in \R^{n \times n}$ is an $M$-matrix, then  $(A + D)^{-1}$ has  its rows with definite sign,   for each $D\geq 0$.
\end{example}

\begin{theorem}\label{teo-finite}
Assume that \eqref{eq:pwls} has solutions. If $[P(x)+T]^{-1}$ exists  and  have its rows with definite sign,   for all $x\in\R^n$. Then  $\{x^{k}\}$  generated by  \eqref{eq:newtonc2pw} converges after finite steps for the unique solution of \eqref{eq:pwls}.
\end{theorem}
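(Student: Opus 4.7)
The plan is to combine a finite-state pigeonhole argument with a monotonicity consequence of the definite-sign row hypothesis. First I would observe that $P(x)$ depends on $x$ only through $\sgn(x^+)\in\{0,1\}^n$, so the collection $\{P(x)+T:x\in\R^n\}$ has at most $2^n$ distinct elements. By hypothesis each such matrix is invertible, whence the iterate $x^{k+1}=[P(x^k)+T]^{-1}b$ takes at most $2^n$ values for $k\geq 1$, and the sequence $\{x^k\}_{k\geq 1}$ is eventually periodic.

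Set $M_k:=[P(x^k)+T]^{-1}$ and $F(x):=x^{+}+Tx-b$. I would then derive the one-step identity $x^{k+1}-x^k=-M_k F(x^k)$, which follows from $[P(x^k)+T]x^k=(x^k)^+ + Tx^k=F(x^k)+b$ together with $[P(x^k)+T]x^{k+1}=b$. A case analysis on the sign of $x^{k-1}_i$, analogous to the one in Lemma~\ref{l:error}, shows that $F(x^k)\geq 0$ componentwise for every $k\geq 1$; moreover, if the iteration has not yet terminated in the sense of Proposition~\ref{pr:ft}, then $F(x^k)\neq 0$, for otherwise $x^{k+1}=x^k$ and the matching sign patterns would invoke that proposition.

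The sign hypothesis enters cleanly at this point. For each $k$ there is a vector $\epsilon^k\in\{+1,-1\}^n$ with $\diag(\epsilon^k)\,M_k\geq 0$ entrywise, so
\[
\diag(\epsilon^k)(x^{k+1}-x^k)\;=\;-\bigl(\diag(\epsilon^k)M_k\bigr)F(x^k)\;\leq\;0.
\]
Each coordinate of the step $x^{k+1}-x^k$ therefore has a predetermined sign. Assuming for contradiction that the iteration never terminates, eventual periodicity forces a cycle $x^m,x^{m+1},\ldots,x^{m+p}=x^m$ with $p\geq 2$, and telescoping yields $\sum_{j=0}^{p-1}M_{m+j}F(x^{m+j})=0$ with every summand nonzero.

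I expect the main obstacle to be converting this cycle identity into a contradiction using only the definite-sign row structure. In the canonical setting motivating the hypothesis---the $M$-matrix example preceding the theorem---the row-sign vector $\epsilon^k$ is in fact independent of $k$, since $(P(x)+T)^{-1}\geq 0$ for every $x$; then $\{\diag(\epsilon)\,x^k\}$ is componentwise nonincreasing and confined to a finite set, so it must eventually stabilize, forcing $x^{k+1}=x^k$ and giving the desired contradiction. To push the argument through under the general definite-sign hypothesis I would try to show that the sign vectors $\epsilon^{m+j}$ around any cycle are forced to coincide coordinatewise (otherwise the monotonicity inequality would constrain some coordinate of $x^k$ to be simultaneously nondecreasing along one arc of the cycle and nonincreasing along another, which together with the nonzero-summand condition would force a coordinate of $F(x^{m+j})$ to vanish identically around the cycle, in tension with the invertibility of $M_{m+j}$), thereby reducing to the monotone case already handled.
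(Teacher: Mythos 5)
Your setup coincides with the paper's: the inequality $F(y)-F(x)-[P(x)+T](y-x)=P(y)y-P(x)y\ge 0$ giving $F(x^k)\ge 0$ for $k\ge 1$, the identity $x^{k+1}-x^k=-[P(x^k)+T]^{-1}F(x^k)$, and the pigeonhole argument on the at most $2^n$ matrices $P(x^k)+T$ forcing eventual periodicity. But the step you yourself flag as ``the main obstacle'' is a genuine gap, and your sketched resolution does not close it. You only complete the argument when the row-sign vector $\epsilon^k$ is independent of $k$ (the case $[P(x)+T]^{-1}\ge 0$ of the $M$-matrix example); under the general definite-sign hypothesis nothing prevents the $i$-th row of $[P(x^k)+T]^{-1}$ from being nonnegative at one iterate and nonpositive at another, and the parenthetical claim that the sign vectors must coincide around a cycle because otherwise a coordinate of $F$ would ``vanish identically around the cycle, in tension with the invertibility of $M_{m+j}$'' is not an argument: a coordinate can decrease along one arc of a cycle and increase along another without any coordinate of $F$ being forced to zero, since each coordinate of the step is a full inner product $s_iF(x^{k})$ and only its sign is controlled.

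The paper closes exactly this gap by using the one hypothesis your proposal never invokes: the assumed existence of a solution $x^{*}$. Taking $y=x^{*}$ in the convexity-type inequality gives $0=F(x^{*})\ge F(x^{k})+[P(x^{k})+T](x^{*}-x^{k})$; multiplying by the $i$-th row $s_i$ of $[P(x^{k})+T]^{-1}$ and using $F(x^{k})\ge 0$ yields the two-sided bounds $x_i^{*}\le x_i^{k+1}\le x_i^{k}$ when $s_i\ge 0$ and $x_i^{*}\ge x_i^{k+1}\ge x_i^{k}$ when $s_i\le 0$. Each coordinate is thus not merely moved in a predetermined direction but is squeezed between the previous iterate and the corresponding coordinate of the solution; this anchoring is what lets the paper conclude that each $\{x_i^{k}\}$ is monotone and bounded, hence convergent, and that around the periodic cycle produced by the pigeonhole argument all inequalities collapse to equalities, giving $x^{j+1}=x^{j+2}$ and finite termination via Proposition~\ref{pr:ft}. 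The same inequality applied to two solutions also yields the uniqueness assertion in the statement, which your proposal does not address. To repair your proof, bring in $x^{*}$ as the anchor rather than relying on the purely local sign-monotonicity of the steps.
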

\begin{proof} First of all note that the sequence generated by \eqref{eq:newtonc2pw} satisfies
	\begin{equation}\label{consequencia-newton}
    F(x^{k})+[P(x^{k})+T](x^{k+1}-x^{k})=0, \qquad  k=0,1,\ldots, 
	\end{equation}
where the function $F$ is defined in \eqref{eq:fucpw}. By direct computation, we have
\begin{equation}\label{eq-chave}
F(y)-F(x)- \left[P(x)+T\right](y-x)= P(y)y-P(x)y\ge0, \qquad \qquad ~ x, y \in \mathbb{R}^n.
\end{equation} 
For arbitrary $x^{0}\in\R^n$, the above inequality and \eqref{consequencia-newton} imply that
$$
F(x^1)\ge F(x^{0})+[P(x^{0})+T](x^1-x^{0})=0.
$$
Thus, applying an induction argument we conclude that
\begin{equation}\label{Fnegativa}
F(x^{k})=[P(x^{k})+T]x^{k}-b\ge0, \qquad \qquad \, k=1,2,\ldots.
\end{equation}
Let $x^{*}$ be a solution of \eqref{eq:pwls}.   Letting $y=x^{*}$ and $x=x^{k}$ in    \eqref{eq-chave}, we obtain
\begin{equation}\label{13insolution}
0= F(x^{*})\ge F(x^{k})+[P(x^{k})+T](x^{*}-x^{k}).
\end{equation} 
Since $s_i=(s_{i1}, \ldots, s_{in})^T$,  the $i-$th row of $[P(x)+T]^{-1}=:(s_{ij})$,  has all elements  either non-negative or non-positive, we have only two options: $\sgn(s_i^T)$ has its components equal to $-1$ or $0$, or $\sgn(s_i^T)$ has its components equal $0$ or $1$. Multiplying  both sides of \eqref{13insolution} by $[P(x^{k})+T]^{-1}$ and using \eqref{Fnegativa}, we have
\begin{equation}\label{I+}
x_i^*\le x_i^k-s_iF(x^{k})\le x_i^k, \qquad \qquad i\in I_+:=\left\{1\le i\le n : \sgn(s_i^T)\in\{0,1\}\right\}, 
\end{equation} 
for all $k\ge1$, and similarly   
\begin{equation}\label{I-}
x_i^*\ge x_i^k-s_iF(x^{k})\ge x_i^k, \qquad \qquad \;i\in I_-:=\left\{1\le i\le n : \sgn(s_i^T)\in\{-1,0\}\right\}.
\end{equation} 
Note that as $[T+P(x^{k})]^{-1}$ exists, then there are no indexes $i$ and $j$ such that $s_i=s_j$, thus $I_+\cap I_-=\emptyset$ and $I_+\cup I_-=\{1,2\ldots,n\}$. It follows from \eqref{eq:nmqcpw},  \eqref{eq:newtonc2pw} and $V^k=P(x^{k}) +T$ that
$$
x^{k+1}=[T+P(x^{k})]^{-1}b=x^{k}-[P(x^{k})+T]^{-1}F(x^{k}),\qquad \qquad \, k=0,1,\ldots.
$$
Therefore,  using \eqref{Fnegativa} and  the definition of  $I_+$, we obtain
\begin{equation}\label{monotoniaI+}
x_i^*\le x_i^{k+1}\le x_i^k, \qquad \qquad \; i\in I_+,
\end{equation} where the first inequality above follows from  \eqref{I+}, and analogously  using \eqref{I-}, we have
\begin{equation}\label{monotoniaI-}
x_i^*\ge x_i^{k+1}\ge x_i^k, \qquad \quad  \; i\in I_-.
\end{equation}
Hence, $\{x^{k}\}$ converges, because $\{x_i^k\}$   is monotone and bounded by $x_i^*$  for $i=1,\ldots,n$. Thus, $\{x_i^k\}$ has a limit $u_i$. Therefore, $\{x^{k}\}$ converges to the vector  $u$ with components  $u_i$. By using again \eqref{consequencia-newton}, we have
\begin{equation*}
\|F(u)\|=\lim_{k\to\infty}\|F(x^{k})\|=\lim_{k\to\infty}\|[P(x^{k})+T](x^{k+1}-x^{k})\|\le(1+\|T\|)\lim_{k\to\infty}\|x^{k+1}-x^{k}\|=0.
\end{equation*} 
Therefore, $u$ is a solution. Furthermore, for any two solutions $x^{*}$ and $y^*$, \eqref{eq-chave} implies 
\begin{equation}\label{sol-unica}
0=F(x^{*})-F(y^*)\ge[T+P(y^*)](x^{*}-y^*).
\end{equation} Then, multiplying by $[T+P(y^*)]^{-1}$ we obtain
$$
y_i^*\ge x_i^*\quad \quad\; i\in I_+ \qquad \mbox{and} \qquad y_i^*\le x_i^*\quad \quad\; i\in I_-.
$$
The result follows by reversing the roles of $x^{*}$ and $y^*$ in \eqref{sol-unica}. Thus, the problem has a unique solution equal to the limit of the sequence $\{x^{k}\}$ generated by \eqref{eq:newtonc2pw}.

Finally we establish the finite  termination of the sequence $\{x^{k}\}$ at the unique solution of problem \eqref{eq:pwls}, which will be denoted by $x^{*}$. Since for all $x \in \mathbb{R}^n$ $P(x)$ has at most $2^n$ different choices,    then there
exist $j, \ell \in\NN$ with  $1\leq \ell < 2^n$ such that $P(x^j)=P(x^{j+\ell})$. Note that if $\ell=1$, then
Proposition~\ref{pr:ft} implies that $x^{j+2}$ is solution of \eqref{eq:pwls}.     This statement implies that 
$$
x^{j+1}=[T+P(x^j)]^{-1}b=[T+P(x^{j+\ell})]^{-1}b=x^{j+\ell+1}.
$$
Applying inductively this argument, 
$$
x^{j+1}=x^{j+\ell+1}, \;\; x^{j+2}=x^{j+\ell+2}, \;\;\ldots, \;x^{j+\ell}=x^{j+2\ell}, \;\;x^{j+\ell+1}=x^{j+2\ell+1}=x^{j+1}.
$$ Thus, the sequence $\{x^k\}$ generated by \eqref{eq:newtonc2pw} has at most $j+\ell$ different elements. Now using \eqref{monotoniaI+} and \eqref{monotoniaI-}, we obtain
$$
x_i^{j+1}\ge x_i^{j+2}\ge \cdots \ge x_i^{j+\ell+1}=x_i^{j+1}, \qquad \qquad \;i\in I_+,
$$ and $$
x_i^{j+1}\le x_i^{j+2}\le \cdots \le x_i^{j+\ell+1}=x_i^{j+1}, \qquad \quad \;i\in I_-.
$$ 
Hence, $x^{j+1}= x^{j+2}$ and in view Proposition~\ref{pr:ft} we conclude that  $x^{j+2}$ is solution of \eqref{eq:pwls}, i.e., $x^{j+2}=x^{*}$. 
\end{proof}
It is worth mentioning that Theorem \ref{teo-finite} generalizes Theorem 2 of \cite{BrugnanoCasulli2007}, in the special case  $[P(x)+T]^{-1}\geq 0$, for all $x\in \mathbb{R}^n$. The invertibility  of $P(x)+T$,  for all $x\in \mathbb{R}^n$,  is sufficient to the  well-definedness   of the  semi-smooth Newton method. However,   the next  example  show that, for the convergence of these methods,  an additional condition on  $T$ must be assumed, for instance,   \eqref{eq:CC2pw} or  $[P(x)+T]^{-1}$ exists with its rows having  definite sign,  for all $x\in\R^n$. 
\begin{example}
Consider the  function $F: \mathbb{R}^2 \to \mathbb{R}^2$ defined by  $F(x)=x^+ + Tx- b$, where
$$
T=  \begin{bmatrix}
-2 & 3 \\
-1 & 1
\end{bmatrix},  \qquad 
b= \begin{bmatrix}
-5\\
-3
\end{bmatrix}.
$$
Note that  $\|T^{-1}\|=3,86...$, the matrix $P(x)+T$ is invertible and have no rows with  definite sign,  for all $x\in \mathbb{R}^2$. Moreover,   $F$ has $x^{*}=[2 , -1]^T$ as the unique zero.  Applying semi-smooth Newton method starting with $x^{0}=[-3, 3]^T$,  for finding the zero of $F$, the generated sequence oscillates between the points
$$
x^1=\displaystyle \begin{bmatrix}
4\\
1
\end{bmatrix}, \qquad 
x^2=\displaystyle \begin{bmatrix}
-1\\
-2
\end{bmatrix}.
$$
\end{example}
\section{Application to  quadratic   programming} \label{sec: defbp}
In this section,  we apply the results of Section~\ref{sec: defscls} to solve \eqref{equation},  in order to find a solution of \eqref{eq:napsc2}.  We begin showing  that,    from each solution of \eqref{equation}   we obtain  a solution of  \eqref{eq:napsc2}. From now on we assume that $Q$ is a symmetric and  positive definite matrix.
\begin{proposition}  \label{pr:polscq}
If the vector $x^*$  is a solution of \eqref{equation}, then  $ (x^*)^+$   is a solution of \eqref{eq:napsc2}.
\end{proposition}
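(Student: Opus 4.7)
The plan is to verify that $y:=(x^*)^+$ satisfies the KKT optimality conditions for the convex quadratic program \eqref{eq:napsc2}, which, since $Q$ is symmetric positive definite, are both necessary and sufficient for $y$ to be a global minimizer. Explicitly, I must check:
\begin{enumerate}
\item[(i)] $y \geq 0$ (primal feasibility);
\item[(ii)] $Qy + \tilde{b} \geq 0$ (dual feasibility, i.e. the gradient of the objective lies in the polar cone of $\R^n_+$);
\item[(iii)] $\langle y,\, Qy + \tilde{b}\rangle = 0$ (complementary slackness).
\end{enumerate}

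The key observation, and the engine of the proof, is to rewrite \eqref{equation} in a convenient form. Starting from $[Q-{\rm I}](x^*)^+ + x^* = -\tilde{b}$ and using the fundamental decomposition $x^* = (x^*)^+ - (x^*)^-$ recorded in Section~\ref{sec:int.1}, one gets
\[
Q(x^*)^+ + \tilde b \;=\; (x^*)^+ - x^* \;=\; (x^*)^-.
\]
This single identity already unlocks everything: condition (i) is immediate from the definition of $(\cdot)^+$; condition (ii) follows because $(x^*)^- \geq 0$ by definition of $(\cdot)^-$; and condition (iii) is the identity $\langle (x^*)^+, (x^*)^-\rangle = 0$, also noted in Section~\ref{sec:int.1}.

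Finally, I would invoke the standard fact that for the convex program \eqref{eq:napsc2}, a point satisfying the KKT system above is a global minimizer, so $y = (x^*)^+$ solves \eqref{eq:napsc2}. There is no real obstacle here; the only subtlety worth stating clearly is the reshuffling from \eqref{equation} into the KKT-compatible form $Q(x^*)^+ + \tilde b = (x^*)^-$, after which the three KKT conditions read off instantly from the properties of the positive/negative part decomposition.
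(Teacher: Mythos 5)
Your proof is correct and follows essentially the same route as the paper: both rewrite \eqref{equation} as $Q(x^*)^+ + \tilde b = (x^*)^-$ via $x^* = (x^*)^+ - (x^*)^-$ and then read off the three optimality conditions from $(x^*)^\pm \geq 0$ and $\langle (x^*)^+,(x^*)^-\rangle = 0$. No gaps.
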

\begin{proof}
The optimality conditions of  the problem in \eqref{eq:napsc2} are given  by
\begin{equation} \label{eq:ocqp}
x \in \R^n_+,  \qquad  Qx + {\tilde b} \in  \mathbb{R}^n_+, \qquad      \left\langle  Qx + {\tilde b} , x \right\rangle =0.
\end{equation}
We claim that $ (x^*)^+$   is a solution of \eqref{eq:ocqp}. We know  that  $(x^*)^+-x^*=(x^*)^-$. Thus,  if $x^*\in \R^n$ is a solution of \eqref{equation}, then
\[
Q(x^*)^+ + {\tilde b}=(x^*)^-.
\]
Hence,  by using  $(x^*)^- \in \mathbb{R}^n_+$  and  $\left\langle   (x^*)^-,  (x^*)^+ \right\rangle=0$, the last equality easily  implies that 
\[
Q(x^*)^+ + {\tilde b}  \in \mathbb{R}^n_+, \qquad \langle  Q(x^*)^+ + {\tilde b},  (x^*)^+ \rangle =  0.
\]
Combining this with  $(x^*)^+ \in\mathbb{R}^n_+$,   we conclude that  $ (x^*)^+$  is a solution of \eqref{eq:ocqp}  as claimed, which completes  the proof.  
\end{proof} 
\noindent The   {\it  semi-smooth Newton method} for solving \eqref{equation}, with starting point   $x^{0}\in \mathbb{R}^n$,  is  given by 
 \begin{equation} \label{eq:newtonc2}
	 x^{k+1}=-\left ( \left[ Q -{\rm I}\right]P(x^{k}) +{\rm I}\right )^{-1} {\tilde b},  \qquad k=0,1,\ldots . 
\end{equation}
\begin{remark} \label{eq:eqv}
If  $Q-{\rm I} $ is  a  nonsingular  matrix,  $T=[Q-{\rm I}]^{-1}$ and $b=- T{\tilde b}$, then    \eqref{equation} and \eqref{eq:pwls} are equivalent.  Moreover, \eqref{eq:newtonc2} becomes 
 \[
 x^{k+1}=\left[T^{-1}P(x^{k}) +{\rm I}\right ]^{-1} T^{-1}b= \left[ P(x^{k}) +T\right ]^{-1} b,  \qquad k=0,1,\ldots , 
\]
which is the  semi-smooth Newton method defined in   \eqref{eq:newtonc2pw}.
\end{remark}
\begin{proposition}\label{pr:uniqq}  
Let $ \lambda \in \mathbb{R}$.       If $\left\| Q -{\rm I}\right\|\leq \lambda<1 $   then  \eqref{equation}  has a unique solution. 
\end{proposition}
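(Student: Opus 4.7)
The plan is to mirror the proof of Proposition~\ref{pr:uniqqpw}, recasting \eqref{equation} as a fixed-point problem and invoking the contraction mapping principle (Theorem~\ref{fixedpoint}). First I would observe that $x \in \R^n$ solves \eqref{equation} if and only if $x$ is a fixed point of the map $\phi : \R^n \to \R^n$ defined by
\begin{equation*}
\phi(x) := -\tilde b - (Q-{\rm I})x^+,
\end{equation*}
since isolating $x$ in \eqref{equation} yields exactly this identity.

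Next I would verify that $\phi$ is a contraction on $\R^n$. Subtracting, one gets
\begin{equation*}
\phi(y) - \phi(x) = -(Q-{\rm I})(y^+ - x^+), \qquad x, y \in \R^n,
\end{equation*}
so by submultiplicativity of the operator norm together with the hypothesis $\|Q-{\rm I}\| \le \lambda$, it follows that $\|\phi(y) - \phi(x)\| \le \lambda \|y^+ - x^+\|$. The one auxiliary fact needed is the nonexpansiveness of the positive-part operator, $\|y^+ - x^+\| \le \|y-x\|$, which I would obtain coordinatewise from the $1$-Lipschitz property of $t \mapsto \max\{t,0\}$ on $\R$, namely $|y_i^+ - x_i^+| \le |y_i - x_i|$; squaring and summing over $i$ gives the vector inequality. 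Chaining this with the previous bound yields $\|\phi(y) - \phi(x)\| \le \lambda \|y-x\|$ for all $x,y \in \R^n$, with $\lambda < 1$. Theorem~\ref{fixedpoint} then supplies a unique fixed point $\bar x \in \R^n$, which is the unique solution of \eqref{equation}.

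I do not anticipate a serious obstacle; the argument is essentially a carbon copy of the one used for Proposition~\ref{pr:uniqqpw}. One point worth flagging, however, is that one should \emph{not} try to reduce this result to Proposition~\ref{pr:uniqqpw} through Remark~\ref{eq:eqv}: the assumption $\|Q - {\rm I}\| < 1$ only forces the eigenvalues of $Q$ into the open interval $(0,2)$, so $Q - {\rm I}$ may well be singular (e.g., $Q$ may have $1$ as an eigenvalue), in which case $T = (Q-{\rm I})^{-1}$ does not exist and the identification with \eqref{eq:pwls} breaks down. The direct contraction argument on $\phi$ sidesteps this issue entirely.
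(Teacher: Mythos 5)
Your proof is correct, and it takes a different (and in fact more robust) route than the paper. The paper proves this proposition by reduction: it invokes Remark~\ref{eq:eqv} to identify \eqref{equation} with \eqref{eq:pwls} via $T=[Q-{\rm I}]^{-1}$, $b=-T\tilde b$, and then applies Proposition~\ref{pr:uniqqpw}. You instead run the contraction argument directly on $\phi(x)=-\tilde b-(Q-{\rm I})x^+$, which is literally the map one would obtain by composing the paper's two steps, but without ever inverting $Q-{\rm I}$. Your closing caveat is well taken and is the real added value here: the hypothesis $\|Q-{\rm I}\|\le\lambda<1$ does not preclude $1$ from being an eigenvalue of $Q$ (e.g.\ $Q={\rm I}$ satisfies the hypothesis with $\|Q-{\rm I}\|=0$), so $Q-{\rm I}$ may be singular and the reduction through Remark~\ref{eq:eqv} is not available in general; the paper's one-line proof silently assumes this nonsingularity. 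Your direct argument covers all cases, at the modest cost of supplying the nonexpansiveness bound $\|y^+-x^+\|\le\|y-x\|$, which you correctly derive coordinatewise from the $1$-Lipschitz property of $t\mapsto\max\{t,0\}$ (this inequality is not the paper's Lemma~\ref{l:error}, but it is elementary and your inline justification suffices).
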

\begin{proof} 
The proof follows by combining    Remark~\ref{eq:eqv} with Proposition~\ref{pr:uniqqpw}.
\end{proof}
\noindent The next result   shows that the semi-smooth Newton defined in \eqref{eq:newtonc2} is always well defined. 
\begin{lemma}\label{nonsingGC}
Let  $x\in \mathbb{R}^n$. The following    matrix is nonsingular 
\begin{equation} \label{eq:mnm}
\left[ Q -{\rm I}\right]P(x) +{\rm I}. 
\end{equation}
 As a consequence,  the  semi-smooth Newton sequence $\{x^{k}\}$    is well-defined,  for any starting point $x^{0} \in \R^{n}$.
\end{lemma}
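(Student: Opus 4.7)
My plan is to show non-singularity of $[Q-\mathrm{I}]P(x)+\mathrm{I}$ by exploiting the very restrictive structure of $P(x)$: it is diagonal with entries in $\{0,1\}$, hence it is the coordinate projection onto some index set $J = J(x) := \{i : x_i > 0\}$.

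First I would fix an arbitrary $x \in \R^n$ and, after relabeling if convenient, write every matrix in block form with respect to the partition $(J,J^c)$. Writing
\[
Q = \begin{bmatrix} Q_{JJ} & Q_{JJ^c} \\ Q_{J^cJ} & Q_{J^cJ^c} \end{bmatrix}, \qquad P(x)=\begin{bmatrix} \mathrm{I} & 0 \\ 0 & 0 \end{bmatrix},
\]
a direct multiplication gives
\[
[Q-\mathrm{I}]P(x)+\mathrm{I}=\begin{bmatrix} Q_{JJ} & 0 \\ Q_{J^cJ} & \mathrm{I} \end{bmatrix}.
\]
This is block lower triangular, so its determinant equals $\det(Q_{JJ})\cdot \det(\mathrm{I}) = \det(Q_{JJ})$.

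The key step is then to use the hypothesis that $Q$ is symmetric positive definite. Every principal submatrix of a symmetric positive definite matrix is itself symmetric positive definite (taking $y=(y_J,0)^\top$ in $y^\top Q y>0$ gives $y_J^\top Q_{JJ}\,y_J>0$ for every nonzero $y_J$), so $Q_{JJ}$ is positive definite and in particular $\det(Q_{JJ})>0$. The edge case $J=\emptyset$ is trivial since then $[Q-\mathrm{I}]P(x)+\mathrm{I}=\mathrm{I}$. Hence $[Q-\mathrm{I}]P(x)+\mathrm{I}$ is nonsingular for every $x\in\R^n$.

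For the consequence about the semi-smooth Newton sequence, I would simply observe that the iteration \eqref{eq:newtonc2} explicitly requires inverting the matrix $[Q-\mathrm{I}]P(x^k)+\mathrm{I}$ at each step; since this has just been shown to be nonsingular regardless of $x^k$, the sequence $\{x^k\}$ is well defined for any starting point $x^0\in\R^n$, completing the proof. I do not expect a real obstacle here: the only substantive ingredient is the standard fact that principal submatrices of positive definite matrices are positive definite, and the block-triangular factorization makes the determinant computation essentially immediate.
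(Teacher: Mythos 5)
Your proof is correct. The paper itself does not spell out an argument here --- it simply defers to Lemma~5 of \cite{FerreiraNemeth2014}, whose proof rests on the same essential fact you use, namely that positive definiteness of $Q$ forces the principal submatrix $Q_{JJ}$ (equivalently, the quadratic form restricted to the range of $P(x)$) to be nonsingular; the standard presentation runs the null-vector computation $\langle P(x)v, QP(x)v\rangle=0 \Rightarrow P(x)v=0 \Rightarrow v=0$ rather than your block-triangular determinant, but the two are the same argument in different clothing. Your version has the merit of being fully self-contained, and it makes clear that only the $0$--$1$ diagonal structure of $P(x)$ and the positive definiteness of $Q$ are used (symmetry is not actually needed), so the indifference of the index set $J$ to how the case $x_i=0$ is classified is harmless.
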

\begin{proof}
The proof of    the  first  part of the lemma,  follows  similar argument to the proof of   Lemma~5 of \cite{FerreiraNemeth2014}. To prove 	the second part of the lemma, combine  the definition of  $\{x^{k}\}$ in \eqref{eq:newtonc2} and the first part of the lemma.
\end{proof}
\begin{proposition} \label{pr:ftq}
	If in \eqref{eq:newtonc2} it happens that $\mbox{sgn}((x^{k+1})^+)=\mbox{sgn}((x^{k})^+)$,   then $x^{k+1}$ is a solution of \eqref{equation}. 
\end{proposition}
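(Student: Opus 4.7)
The plan is to rewrite the Newton update as a linear identity and then exploit the sign-stabilization hypothesis to convert that identity into the defining equation of \eqref{equation}. Starting from \eqref{eq:newtonc2} and multiplying on the left by the (invertible, by Lemma~\ref{nonsingGC}) matrix $[Q-\mathrm{I}]P(x^{k})+\mathrm{I}$, one obtains
$$
[Q-\mathrm{I}]P(x^{k})x^{k+1} + x^{k+1} = -\tilde b.
$$

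Next I would invoke the key elementary identity $P(y)\,y = y^{+}$, valid for every $y\in\R^{n}$. This follows directly from $P(y)=\diag(\sgn(y^{+}))$: componentwise, $\sgn(y_{i}^{+})\,y_{i}$ equals $y_{i}$ when $y_{i}>0$ and $0$ when $y_{i}\le 0$, matching $y_{i}^{+}$ in both cases. The hypothesis $\sgn((x^{k+1})^{+})=\sgn((x^{k})^{+})$ is exactly the statement $P(x^{k+1})=P(x^{k})$, so
$$
P(x^{k})\,x^{k+1} \;=\; P(x^{k+1})\,x^{k+1} \;=\; (x^{k+1})^{+}.
$$

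Substituting this back into the displayed Newton identity yields
$$
[Q-\mathrm{I}]\,(x^{k+1})^{+} + x^{k+1} = -\tilde b,
$$
which is precisely \eqref{equation} evaluated at $x^{k+1}$, so $x^{k+1}$ solves \eqref{equation}. There is no real obstacle in this argument; the only ingredient beyond algebraic manipulation is the identity $P(y)y=y^{+}$, and one might alternatively obtain the same conclusion by appealing to the equivalence in Remark~\ref{eq:eqv} and applying Proposition~\ref{pr:ft} directly, but the direct verification above is cleaner and self-contained.
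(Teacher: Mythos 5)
Your proof is correct, but it takes a different route from the paper's. The paper disposes of this proposition in one line, by invoking the equivalence of \eqref{equation} with \eqref{eq:pwls} from Remark~\ref{eq:eqv} (via $T=[Q-\mathrm{I}]^{-1}$, $b=-T\tilde b$) and then citing Proposition~\ref{pr:ft}, the corresponding finite-termination statement for the general system --- exactly the alternative you mention in your closing sentence. Your direct verification, multiplying the iteration by $[Q-\mathrm{I}]P(x^{k})+\mathrm{I}$ and using $P(y)y=y^{+}$ together with $P(x^{k+1})=P(x^{k})$, is self-contained and, notably, does not require $Q-\mathrm{I}$ to be nonsingular, whereas the reduction in Remark~\ref{eq:eqv} is only stated under that hypothesis (a symmetric positive definite $Q$ can have $1$ as an eigenvalue). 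So your argument is slightly more general and arguably cleaner; the paper's buys brevity by reusing the machinery already established for \eqref{eq:pwls}. One tiny point of care: the identity $\sgn((x^{k+1})^{+})=\sgn((x^{k})^{+})\Leftrightarrow P(x^{k+1})=P(x^{k})$ is immediate from the definition \eqref{eq:mppw}, as you correctly note, so there is no gap.
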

\begin{proof}
The proof follows  combining   Remark~\ref{eq:eqv} and  Proposition~\ref{pr:ft}.
\end{proof}
\begin{proposition} 
         The  sequence $\{x^{k}\}$,  defined in \eqref{eq:newtonc2},   is bounded from any starting point. Moreover, for each accumulation point $\bar x$ of  $\{x^{k}\}$,  there exists 
	 an $\hat x \in \mathbb{R}^n$ such that
\begin{equation} \label{eq:sap}
 \left( \left[Q - {\rm I}\right]P(\hat{x}) +{\rm I}\right){\bar x}=-\tilde{b}.
\end{equation}
	 In particular, if   $\mbox{sgn}(\bar x^+)=\mbox{sgn}(\hat x^+)$   then $ \bar x$  is a solution of \eqref{equation}.
\end{proposition}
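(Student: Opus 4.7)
The plan is to mirror Proposition~\ref{pr:bounded} while taking advantage of the explicit form of the Newton iteration in \eqref{eq:newtonc2}. The key observation is that the diagonal matrix $P(x)$ has entries in $\{0,1\}$, so the set $\mathcal{P}:=\{P(x):x\in\R^n\}$ contains at most $2^n$ matrices. Together with Lemma~\ref{nonsingGC}, this means the finite family of matrices $\{[Q-{\rm I}]P+{\rm I}:P\in\mathcal{P}\}$ consists of invertible matrices, and so
\[
M:=\max_{P\in\mathcal{P}}\bigl\|([Q-{\rm I}]P+{\rm I})^{-1}\bigr\|
\]
is finite. Boundedness then follows immediately from \eqref{eq:newtonc2}, since $\|x^{k+1}\|\leq M\|\tilde b\|$ for every $k\geq 0$.

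For the accumulation point formula, I would take $\bar x$ an accumulation point of $\{x^{k}\}$ and a subsequence $x^{k_j}\to \bar x$. The sequence of diagonal matrices $P(x^{k_j-1})$ lies in the finite set $\mathcal{P}$, so I can extract a further subsequence (not relabeled) along which $P(x^{k_j-1})=P_0$ is constant. Along this subsequence the iteration \eqref{eq:newtonc2} gives $x^{k_j}=-([Q-{\rm I}]P_0+{\rm I})^{-1}\tilde b$ for every $j$, which is a constant vector; passing to the limit yields $\bar x=-([Q-{\rm I}]P_0+{\rm I})^{-1}\tilde b$. Choosing $\hat x:=x^{k_j-1}$ for any $j$ in the extracted subsequence gives $P(\hat x)=P_0$, so $([Q-{\rm I}]P(\hat x)+{\rm I})\bar x=-\tilde b$, which is \eqref{eq:sap}.

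For the last assertion, $\sgn(\bar x^+)=\sgn(\hat x^+)$ is equivalent to $P(\bar x)=P(\hat x)$ by the definition \eqref{eq:mppw}. Substituting into \eqref{eq:sap} and using the identity $P(\bar x)\bar x=\bar x^+$, equation \eqref{eq:sap} reduces to $[Q-{\rm I}]\bar x^+ +\bar x=-\tilde b$, which is exactly \eqref{equation}, so $\bar x$ solves \eqref{equation}.

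No step is genuinely hard: the only subtlety is the standard device of using pigeonhole on the finite set $\mathcal{P}$ to obtain a subsequence on which $P(x^{k_j-1})$ is constant, an argument already implicit in the proof of Proposition~\ref{pr:bounded}. One could alternatively appeal to Remark~\ref{eq:eqv} and invoke Proposition~\ref{pr:bounded} directly, but this requires $Q-{\rm I}$ to be nonsingular, whereas the present statement imposes no such hypothesis; for that reason I prefer the self-contained argument above.
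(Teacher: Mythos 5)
Your proof is correct. It departs from the paper's route in a way worth noting: the paper disposes of this proposition in one line by invoking Remark~\ref{eq:eqv} to translate \eqref{eq:newtonc2} into the iteration \eqref{eq:newtonc2pw} and then citing Proposition~\ref{pr:bounded}. That reduction requires $Q-{\rm I}$ to be nonsingular (otherwise $T=[Q-{\rm I}]^{-1}$ does not exist), a hypothesis the proposition does not impose --- the standing assumption of Section~4 is only that $Q$ is symmetric positive definite, which permits $1$ as an eigenvalue of $Q$. You correctly spotted this and instead gave a direct argument resting only on Lemma~\ref{nonsingGC} together with the finiteness of $\mathcal{P}=\{P(x):x\in\R^n\}$: boundedness via the uniform bound $M$ on the finitely many inverses, the formula \eqref{eq:sap} via pigeonhole on $P(x^{k_j-1})$, and the final assertion via $P(\bar x)\bar x=\bar x^+$. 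This is essentially the same underlying mechanism as the Mangasarian-style argument behind Proposition~\ref{pr:bounded}, but made explicit and self-contained, and it buys you the proposition without the tacit nonsingularity of $Q-{\rm I}$; the only (harmless) detail left implicit is that the subsequence indices $k_j$ should be taken $\geq 1$ so that $x^{k_j-1}$ is defined.
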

\begin{proof}
Using Remark~\ref{eq:eqv} and Proposition~\ref{pr:bounded} the result follows.
\end{proof}
\begin{theorem}\label{th:mrq}
The sequences $\{x^{k}\}$  generated by the semi-smooth Newton Method \eqref{eq:newtonc2} for solving   \eqref{equation}, is well defined  for  any  starting  point   $x^{0}\in \mathbb{R}^n$.  Moreover, if 
\begin{equation} \label{eq:CC2}
\left\|Q -{\rm I}\right\|<1/2, 
\end{equation}
 then  the sequence $ \{x^{k}\}$   converges $Q$-linearly  to  $x^*\in \mathbb{R}^n$,  the unique solution  of \eqref{equation}, as follows 
  \begin{equation} \label{eq:lconv3}
\|x^*-x^{k+1}\|\leq \frac{\| Q -{\rm I}\|}{1- \|Q -{\rm I}\|} \|x^*-x^{k}\|,  \qquad k=0, 1, \ldots ,
\end{equation}
and $ (x^*)^+$   is a solution of \eqref{eq:napsc2}.
\end{theorem}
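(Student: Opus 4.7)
The plan is to reduce Theorem~\ref{th:mrq} to the already proved Theorem~\ref{th:mrqpw} via the equivalence noted in Remark~\ref{eq:eqv}, and then to invoke Proposition~\ref{pr:polscq} for the final statement about \eqref{eq:napsc2}.

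Well-definedness of $\{x^k\}$ for any starting point $x^0\in\R^n$ is immediate: Lemma~\ref{nonsingGC} guarantees that the matrix $[Q-{\rm I}]P(x^k)+{\rm I}$ inverted in \eqref{eq:newtonc2} is nonsingular for every $x^k$, and this step makes no use of the norm bound \eqref{eq:CC2}.

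For the convergence part, assume \eqref{eq:CC2}. Working in the generic case that $Q-{\rm I}$ is nonsingular, I would set $T:=(Q-{\rm I})^{-1}$ and $b:=-T\tilde b$, so that $\|T^{-1}\|=\|Q-{\rm I}\|<1/2$. By Remark~\ref{eq:eqv}, equation \eqref{equation} is then equivalent to \eqref{eq:pwls}, and the iteration \eqref{eq:newtonc2} is literally the Newton recursion \eqref{eq:newtonc2pw} for this $T$ and $b$. Consequently, Theorem~\ref{th:mrqpw} applies directly and produces both the unique solution $x^*$ and the $Q$-linear rate
\[
\|x^*-x^{k+1}\|\leq \frac{\|T^{-1}\|}{1-\|T^{-1}\|}\|x^*-x^k\|=\frac{\|Q-{\rm I}\|}{1-\|Q-{\rm I}\|}\|x^*-x^k\|,
\]
which is precisely \eqref{eq:lconv3}. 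The last assertion, that $(x^*)^+$ solves \eqref{eq:napsc2}, then follows immediately from Proposition~\ref{pr:polscq}.

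The main obstacle I anticipate is purely notational: the hypothesis $\|Q-{\rm I}\|<1/2$ does not by itself force $Q-{\rm I}$ to be invertible, so a strict reading requires a brief aside for the degenerate case. But if $Q={\rm I}$ then \eqref{equation} reduces to $x=-\tilde b$, which \eqref{eq:newtonc2} solves in a single step, and \eqref{eq:lconv3} trivially holds; so this edge case does not pose any real difficulty and the proof is essentially a bookkeeping exercise built on Theorem~\ref{th:mrqpw}, Lemma~\ref{nonsingGC}, and Proposition~\ref{pr:polscq}.
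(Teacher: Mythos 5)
Your proof is essentially identical to the paper's, which likewise disposes of the theorem by combining Lemma~\ref{nonsingGC} (well-definedness), Remark~\ref{eq:eqv} together with Theorem~\ref{th:mrqpw} (uniqueness and the rate \eqref{eq:lconv3}), and Proposition~\ref{pr:polscq} (that $(x^*)^+$ solves \eqref{eq:napsc2}). The one caveat is that your degenerate-case aside covers only $Q={\rm I}$, whereas $\|Q-{\rm I}\|<1/2$ also permits $Q-{\rm I}$ to be singular without vanishing (e.g.\ $Q$ symmetric positive definite with $1$ as an eigenvalue, such as $Q=\diag(1,1.3)$), in which case the substitution $T=(Q-{\rm I})^{-1}$ of Remark~\ref{eq:eqv} is unavailable and a separate argument would be needed --- but this gap is present, and unacknowledged, in the paper's own proof, so on this point you are if anything more careful than the source.
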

\begin{proof}
The well-definedness,  for  any  starting  point   $x^{0}\in \mathbb{R}^n$, follows from Lemma~\ref{nonsingGC}. For concluding the proof  combine, Proposition~\ref{pr:polscq},   Remark~\ref{eq:eqv} and  Theorem~\ref{th:mrqpw}.
\end{proof}
Note that \eqref{eq:CC2} implies that the eigenvalues of $Q$ belong to  $(0, \frac{1}{2})\cup (\frac{1}{2},\frac{3}{2})$.  Let  us  present  an important  equivalent form  of problem  \eqref{eq:napsc2}. 
\begin{example}
 Given  $A \in \R^{n\times n}$    a nonsingular matrix,  $A\R^n_+:=\{Ax~:~ x\in\R^n_+\}$  and  $z\in \R^n$.   The {\it projection $P_{A\R^n_+}(z)$ of the point $z$ onto the cone  $ A\R^n_+ $} is defined by 
\[
P_{A\R^n_+}(z):=\argmin \left\{ \frac{1}{2}\|z-y\|^2~:~ y\in A\R^n_+\right\}.
\]
From the definition of the simplical cone associated with  the  matrix   $A$, the problem of   projecting the point $z\in \R^n$
onto a  simplicial cone   $A\R^n_+$ may be  stated  as the following  positively constrained  quadratic programming problem 
\begin{align*}
 & \min  ~ \frac{1}{2}\|z-Ax\|^2, \\ 
 & \const  x \in \R^n_+.
\end{align*}
Hence, if $v\in \R^n$ is the   unique  solution of this problem then  we have   $P_{A\R^n_+}(z)=Av$.  The above problem is equivalent to the
following   nonegatively constrained  quadratic programming problem
\begin{align}  \label{eq:nap}
 & \min  ~ \frac{1}{2}x^\top Qx+x^\top \tilde{b} +c \\  \notag
 & \const  x \in \R^n_+, 
\end{align}
by taking $Q=A^\top A$,  $\tilde{b}=-A^\top z$ and $c=z^\top z /2$.  The  optimality condition for problem \eqref{eq:nap}  implies  that its solution  can be obtained by solving the following linear complementarity problem 
\begin{equation} \label{eq:lcp}
y - Qx= \tilde{b}, \qquad x \ge 0, \qquad    y \ge0, \qquad    \langle x, y \rangle=0.
\end{equation} 
\end{example}
\begin{remark}
It is  easy to establish  that corresponding to each  nonnegative quadratic problems  \eqref{eq:nap} and each  linear complementarity problems \eqref{eq:lcp} associated to positive definite matrices, there are equivalent  problems of   projection onto  simplicial cones.   Therefore, the problem of projecting onto  simplicial cones can be solved by  active set
methods \cite{Bazaraa2006,LiuFathi2011,LiuFathi2012,Murty1988} or any algorithms for solving LCPs,  see \emph{e.g.},  \cite{Bazaraa2006,Murty1988} and
special methods based on its geometry,   see \emph{e.g.}, \cite{MurtyFathi1982,Murty1988}. Other fashionable ways to solve this problem are based on
the classical von Neumann algorithm (see \emph{e.g.}, the Dykstra algorithm \cite{DeutschHundal1994,Dykstra83,Shusheng2000}). Nevertheless, these 
methods are also quite expensive (see the numerical results in \cite{Morillas2005} and the remark preceding section 6.3 in  \cite{MingGuo-LiangHong-BinKaiWang2007})
\end{remark}
\section{Computational results} \label{sec:ctest}
\label{sec:computationalresults}

In this section we test our semi-smooth Newton method (\ref{eq:newtonc2}) to find solutions on generated random instances of \eqref{equation}. We present two types of experiments. In one of them, we guarantee that for each test problem the hypotheses given in 
Theorem~\ref{th:mrq} are satisfied and in the other they are not.

 All programs were implemented in MATLAB Version 7.11 64-bit and run on a $3.40 GHz$ Intel Core $i5-4670$ with $8.0GB$ of RAM. All MATLAB codes and generated data of this paper are available in \url{http://orizon.mat.ufg.br/pages/34449-publications}. 

All experiments are based on the following general considerations:
\begin{itemize}
    \item In order to accurately measure the method's runtime for a problem, each one of the test problems  was solved $10$ times and the runtime data collected. Then, we defined the corresponding  {\it method's  runtime for a problem} as the median of these measurements.

\item Let $\Tol X\in \R_+$ be a relative bound, we consider that the method converged to the solution and stopped the execution when, for some $k$, the condition 
$$
\|u-x^{k}\|<\Tol X(1+\|u\|), 
$$ is satisfied. If the previous stopping criteria are not met within $100$ iterations, we declare that the method did not converge.
\end{itemize}

\subsection{When the hypotheses of Theorem~\ref{th:mrq} are satisfied}
In this experiment, we studied the behavior of the method on sets of $100$ randomly generated test problems of dimension $n=2000,3000,4000,5000$,
respectively. Furthermore, we analyzed the influence of the initial point in the convergence of the method on $1000$ randomly generated test problems of dimension
$n = 100$. 
For each test problem in this experiment the hypotheses given in the Theorem~\ref{th:mrq} are satisfied, generating each of them as follows:
\begin{enumerate}
  \item    To construct the matrix $Q\in \R^{n\times n}$ symmetric and positive definite  satisfying  the assumption (\ref{eq:CC2}) in Theorem~\ref{th:mrq}, we first chose a
	     random number $\beta$ from the standard uniform distribution on the open interval $(0,1/2)$.  Secondly,  we compute  the 
	     singular value decomposition $U \Sigma V^{T}$ of a symmetric and positive definite matrix of the form $B^{T}B$, where $B$ is a generated $n\times n$ real nonsingular matrix containing random values drawn 
	     from the uniform distribution on the interval $[-10^6,10^6]$. Finally,  in
the present case the equality $V= U$ holds and we compute the matrix $Q$ from 
$$
Q =U~\left(I+\frac{\beta}{\sigma} \Sigma\right)~U^T, 
$$
where  $\sigma$ is the largest singular value of $\Sigma$. It is important to note that by construction of the matrix $Q$ always $\beta=\left\|Q -{\rm I}\right\|$.

\item   We have chosen the solution $u\in \R^{n}$ containing random values drawn from the uniform distribution on the interval $[-10^6,10^6]$ and
	then we have computed $\tilde{b}\in \R^{n}$ from equation (\ref{equation}). 
\item Finally we have chosen a  starting point $x^{0}\in \R^{n}$ containing random values drawn from the uniform distribution on the interval $[-10^6,10^6]$.
\end{enumerate}

In accordance with the theoretical convergence of the method, ensured by Theorem~\ref{th:mrq}, the computational convergence is obtained in all cases.

The computational results to analyze the behavior of the method on sets of $100$ generated random test problems of different dimensions, are reported in Table~\ref{tab:example1}. From these, it can be noted that for the same dimension, to achieve higher accuracy, the method does not experience a significant increase in the number of iterations or in runtime. On the other hand, the increase in the dimension of the problems does not necessarily involve  an increase in the number of iterations to achieve the same accuracy, however, a larger runtime is consumed. A larger runtime consumption is associated with the fact that the semi-smooth Newton method~(\ref{eq:newtonc2}) requires the solution of a linear system in each iteration, whose computational effort increases with the dimension of the problem. Another important aspect that can be checked in Table~\ref{tab:example1} is the ability of the method to converge  in about three iterations on average.

\begin{table}[htbp]
\centering
\renewcommand{\arraystretch}{1.2}
\begin{tabular}{|c|lll|rrr|}
\hline
\multicolumn{1}{|c|}{$n$}&\multicolumn{3}{c|}{Total Iterations}&\multicolumn{3}{c|}{Total Time}\\\cline{1-7}
 $2000$& 278 & 294 & 296 &  142.48&  147.77 &  148.05 \\ 
 $3000$& 282 & 295& 299 &  445.61 &  465.48 &  471.65 \\
 $4000$& 278 & 297 & 300 & 1013.79 & 1082.43 & 1093.55 \\
 $5000$& 285 & 303 & 307 & 1945.23 & 2067.08 & 2112.72 \\
\hline\hline
\multicolumn{1}{|c|}{$\Tol X$}&\multicolumn{1}{l}{$10^{-6}$}&\multicolumn{1}{l}{$10^{-8}$}&\multicolumn{1}{l|}{$10^{-10}$}&\multicolumn{1}{c}{$10^{-6}$}&\multicolumn{1}{c}{$10^{-8}$}&\multicolumn{1}{c|}{$10^{-10}$}\\
\hline
\end{tabular}
\caption{ Total overall iterations and total time in seconds, performed and consumed, respectively by the semi-smooth Newton method (\ref{eq:newtonc2})  to solve the $100$ test problems  of each dimension for different accuracies.}
\label{tab:example1}
\end{table} 

In order to study the influence of the initial point in the convergence of the method,  we have generated $1000$ test problems of dimension 
$n = 100$ and we have associated to each of them $1000$ generated initial points. We have solved each problem with the $1000$ 
corresponding initial points. Then, we have computed the standard deviation ($\mbox{STD}$) $\overline{d}_i$ and the mean value (\mbox{MEAN})
$\overline{m}_i$ of the number of iterations performed by the method to solve the problem $i$ taking each one of the $1000$ initial points.
Finally we have computed the mean of all $\overline{d}_i$  and the mean of all $\overline{m}_i,\;\;i=1,...,1000$. All cases converged, indicating
robustness of the method with respect to the starting point. The results are shown in Table~\ref{tab:example2}. The standard deviation of  the
number of iterations performed by the method to solve the problem $i$ with  the $1000$ initial points  gives us an idea of the influence of the
initial point in the number of iterations performed by the method in each problem. The reported means of these  standard deviation values give 
us an idea of the influence of the initial point in the number  of iterations performed by the method in all the problems in general. The results
in the table show that on average the number of iterations performed by our method to find the solution for a problem varies only very slightly 
with the chosen starting point. Again we see that the average number of iterations performed is less than three.

\begin{table}[htbp]
\centering
\renewcommand{\arraystretch}{1.2}
\begin{tabular}{|c|c|c|}
\hline
Tol X&\multicolumn{1}{c|}{$\mbox{MEAN}\left(\{\overline{d}_i\}_{i=1,...,1000}\right)$}&\multicolumn{1}{c|}{$\mbox{MEAN}\left(\{\overline{m}_i\}_{i=1,...,1000}\right)$}\\\hline\hline
$10^{-6}$&0.2450 & 2.3331\\
$10^{-8}$&0.2530 &2.3454\\
$10^{-10}$&0.2536 & 2.3457\\\hline
\end{tabular}
\caption{ Influence of the initial point in the convergence of the semi-smooth Newton method (\ref{eq:newtonc2}) on a total of  $1000$ test problems of dimension $n=100$ each of them with $1000$ generated initial points for different accuracies.}
\label{tab:example2}
\end{table} 

\subsection{When the hypotheses of Theorem~\ref{th:mrq} are not satisfied} 
In this experiment, we studied the behavior of the method on $1000$ test problems of dimension $n=1000$, where the hypotheses given in the Theorem~\ref{th:mrq} are not all satisfied.

In this case, the test problems were built almost as in the previous experiment. The only difference was in the construction of the matrix
$Q\in \R^{n\times n}$ not satisfying the assumption (\ref{eq:CC2}) of Theorem~\ref{th:mrq}. Namely, we chose the random number $\beta$ from the
standard uniform distribution on the  interval $[lb,ub)$, where $\frac{1}{2}\leq lb<ub$.

According to the obtained numerical results, we can conjecture that our method converges to a much broader class of problems, not satisfying the
hypotheses of Theorem~\ref{th:mrq}. However we detected that convergence with high accuracy to the solution largely depends on the magnitude of
the value of the norm in condition (\ref{eq:CC2}). This idea can be observed inspecting Table \ref{tab:example3}. As the magnitude of the value
of the norm in (\ref{eq:CC2}) increases sufficiently, the number of
problems for which the method converges to the solution with greater accuracy decreases. This phenomenon, of course, is not associated to the convergence of the method for a specific problem, but, rather, there is an optimum accuracy achievable due to the accumulated errors. Small tolerances do not ensure obtaining accurate results. It can be the case that convergence is overlooked and unnecessary iterations are performed. It is important to note in the table that, even when the hypothesis is unfulfilled, the method converges for these problems, however it can be noted that the number of iterations performed by the method increases with respect of the previous experiments in which the hypotheses were fulfilled.

\begin{table}[htbp]
\centering
\renewcommand{\arraystretch}{1.2}
\begin{tabular}{|c|rrr|rrr|}
\hline
\multicolumn{1}{|c|}{$\beta\in[lb,ub)$}&\multicolumn{3}{c|}{Solved Problems}&\multicolumn{3}{c|}{Iterations}\\\cline{1-7}
 $[0.5,10^3)$ &1000 & 1000 & 1000 & 7.2160 & 7.2190 & 7.2190\\
 $[10^3,10^4)$&1000 & 1000 & 1000 & 9.1800 & 9.1850 & 9.1850\\
 $[10^4,10^5)$& 1000 & 1000 & 1000 & 9.6730 & 9.6760 & 9.6760\\
 $[10^5,10^6)$&1000 & 1000 & 693  & 10.2820 & 10.2860 & 10.2540\\
 $[10^6,10^7)$& 1000 & 999 & 0    & 10.3870 & 10.3874 &   -\\
 $[10^7,10^8)$& 998 & 690  & 0    & 10.4339 & 10.4246 &  -\\\hline\hline
\multicolumn{1}{|c|}{Tol X}&\multicolumn{1}{c|}{$10^{-6}$}&\multicolumn{1}{c|}{$10^{-8}$}&\multicolumn{1}{c|}{$10^{-10}$}&\multicolumn{1}{c|}{$10^{-6}$}&\multicolumn{1}{c|}{$10^{-8}$}&\multicolumn{1}{c|}{$10^{-10}$}\\
\hline
\end{tabular}
\caption{Number of problems solved by the semi-smooth Newton method (\ref{eq:newtonc2}) on a total of  $1000$ test problems of dimension $n=1000$ of each condition ($lb\leq \beta <ub$) for different accuracies, and the mean number of iterations performed by the semi-smooth Newton method (\ref{eq:newtonc2})  to solve one problem in each case.}
\label{tab:example3}
\end{table} 
\section{Conclusions} \label{sec:conclusions}
In this paper we studied  a special class of  convex quadratic programming under positive constraint, which,  via  its optimality conditions, is reduced to  finding the unique solution of a  nonsmooth system of equations.  Our main 
result shows that, under a mild assumption on the simplicial cone, we can  apply  a semi-smooth Newton method for finding a unique solution of  
the  obtained  associated nonsmooth system of equations and  that  the generated sequence converges linearly to the  solution   for  any starting
point.  It would be interesting to see whether the used technique can be applied for solving  more general convex programs.    

Since the optimality condition of a positive  constrained convex quadratic programming problem is equivalent to a   linear
complementarity problem, which is equivalent to the problem of finding the unique solution of a nonsmooth system of equations, another   interesting problem to address is to compare our semi-smooth Newton method   with   active set methods  
\cite{Bazaraa2006,LiuFathi2011,LiuFathi2012,Murty1988}. 

This paper is a continuation of  \cite{FerreiraNemeth2014}, where we studied the problem of projection onto a simplicial cone by using 
a semi-smooth Newton method. We expect that the results of this paper become a further step towards solving general convex optimization problems. We foresee further progress in this topic in the near future.

\end{document}